\numberwithin{equation}{section}
\newtheorem{theorem}{Theorem}[section]
\newtheorem{lemma}[theorem]{Lemma}
\newtheorem{prop}[theorem]{Proposition}
\theoremstyle{definition}
\theoremstyle{remark}
\newtheorem{remark}[theorem]{Remark}
\begin{document}
\title{An $M$-function associated with Goldbach's problem}

\author{Kohji Matsumoto}
\address{K. Matsumoto: Graduate School of Mathematics, Nagoya University, Chikusa-\
ku, Nagoya 464-8602, Japan}
\email{kohjimat@math.nagoya-u.ac.jp}

\keywords{Goldbach's problem, $M$-function, Riemann zeta-function}
\subjclass[2010]{Primary 11M41, Secondary 11P32, 11M26, 11M99}

\begin{abstract}
We prove the existence of the $M$-function, by which we can state the limit theorem
for the value-distribution of the main term in the asymptotic formula for the
summatory function of the Goldbach generating function.
\end{abstract}

\date{}
\maketitle
%%%%%%%%%%%%%%%%%%%%%%%%%%%%%%%%%%%%%%%%%%%%%%%%%%%%%%%%%%%%%%%%%%%%%%%%%%%%%%  
%                                                                               
%   1                                                                           
%                                                                               
%%%%%%%%%%%%%%%%%%%%%%%%%%%%%%%%%%%%%%%%%%%%%%%%%%%%%%%%%%%%%%%%%%%%%%%%%%%%%%  
\section{The Goldbach generating function}
%%%%%%%%%%%%%%%%%%%%%%%%%%%%%%%%%%%%%%%%%%%%%%%%%%%%%%%%%%%%%%%%%%%%%%%%%%%%%%%

One of the most famous unsolved problems in number theory is Goldbach's conjecture,
which asserts that all even integer $\geq 6$ can be written as a sum of two odd primes.

Let 
$$
r_2(n)=\sum_{l+m=n}\Lambda(l)\Lambda(m),
$$
where $\Lambda(\cdot)$ denotes the von Mangoldt function.
This may be regarded as the Goldbach generating function.    In fact, 
Goldbach's conjecture would imply $r_2(n)>0$ for all even $n\geq 6$.
Hardy and Littlewood \cite{HaLi24} conjectured that $r_2(n)\sim nS_2(n)$ for even $n$ as 
$n\to\infty$, where
$$
S_2(n)=\prod_{p|n}\left(1+\frac{1}{p-1}\right)
\prod_{p\nmid n}\left(1-\frac{1}{(p-1)^2}\right)
$$
($p$ denotes the primes).
In view of this conjecture, it is interesting to evaluate the sum
$$
A_2(x)=\sum_{n\leq x}(r_2(n)-nS_2(n)) \qquad (x>0).
$$
It is known that the estimate $A_2(x)=O(x^{3/2+\varepsilon})$ (where, and in what follows, 
$\varepsilon$ is an arbitrarily small positive number) is equivalent to the Riemann hypothesis (RH) for the Riemann zeta-function $\zeta(s)$ (see Granville \cite{Gran07},
Bhowmik and Ruzsa \cite{BhRu18}, Bhowmik et al. \cite{BHMS}).

The  unconditional estimate $A_2(x)=O(x^2(\log x)^{-A})$ ($A>0$) was classically known.
In 1991,
Fujii published a series of papers \cite{Fuj91} \cite{Fuj91a} \cite{Fuj91b}, in which
he refined this classical estimate under the 
RH.   Fujii first proved $A_2(x)=O(x^{3/2})$ in \cite{Fuj91}, and then in
\cite{Fuj91a}, he gave the following asymptotic formula
\begin{align}\label{A-Psi}
A_2(x)=-4x^{3/2}\cdot \Re\Psi(x) +R(x),
\end{align}
where $R(x)$ is the error term, and 
\begin{align}
\Psi(x)=\sum_{\gamma>0}\frac{x^{i\gamma}}{(1/2+i\gamma)(3/2+i\gamma)}
=\sum_{m=1}^{\infty}\frac{x^{i\gamma_m}}{(1/2+i\gamma_m)(3/2+i\gamma_m)},
\end{align}
with $\gamma$ running over all imaginary parts of non-trivial zeros of $\zeta(s)$ which are
positive.    We number those imaginary parts as $0<\gamma_1<\gamma_2<\cdots<\gamma_m<\cdots$.

Concerning the error term $R(x)$, Fujii \cite{Fuj91a} showed the estimate
$R(x)=O(x^{4/3}(\log x)^{4/3})$.
Egami and the author \cite{EgMa07} raised the conjecture
$$
R(x)=O(x^{1+\varepsilon}), \qquad R(x)=\Omega(x).
$$
This conjecture was settled by
Bhowmik and Schlage-Puchta \cite{BhSc10} in the form
$$
R(x)=O(x(\log x)^5), \qquad R(x)=\Omega(x\log\log x).
$$
The best upper-bound estimate at present is $O(x(\log x)^3)$
(Languasco and Zaccagnini\cite{LZ}; see also Goldston and Yang \cite{GY}).
As for the more detailed history, see \cite{BhHa20}.

Properties of
the main term on the right-hand side of \eqref{A-Psi} was first considered by
Fujii \cite{Fuj91b}.
Let 
\begin{align}
f(\alpha)=\Psi(e^{\alpha}) \qquad (\alpha\in\mathbb{R}).
\end{align}
In \cite{Fuj91b}, Fujii studied the value-distribution of $f(\alpha)$, and proved
the following limit theorem.     Assume that $\gamma$'s are linearly independent over
$\mathbb{Q}$ (which we call the LIC).    Then Fujii stated the existence of the
``density function'' $F(x)$ ($z=x+iy\in\mathbb{C}$) for which
\begin{align}\label{Fujii-result}
\lim_{X\to\infty}\frac{1}{X}\mu\{0\leq\alpha\leq X\;|\; f(\alpha)\in R\}
=\iint_R F(x+iy)dxdy
\end{align}
holds for any rectangle $R$ in $\mathbb{C}$, where $\mu\{\cdot\}$ means the
one-dimensional Lebesgue measure.
This is an analogue of the following result of Bohr and Jessen \cite{BJ30} 
\cite{BJ32} for the
value-distribution of $\zeta(s)$.   Let $\sigma>1/2$.    Bohr and Jessen proved the
existence of a continuous function $F_{\sigma}(z)$ for which
\begin{align}\label{BJ-result}
\lim_{T\to\infty}\frac{1}{2T}\mu\{-T\leq t\leq T\;|\; \log\zeta(\sigma+it)\in R\}
=\iint_R F_{\sigma}(x+iy)dxdy
\end{align}
holds for any rectangle $R$.

Fujii gave a sketch of the proof, which is along the same line as in
\cite{BJ30}.
In particular, Fujii indicated explicitly how to construct 
$F(x+iy)$, following the method of Bohr and Jessen \cite{BJ29}.

In \cite{Fuj91b}, Fujii also studied the size of the oscillation of $\Psi(x)$.
This direction has recently been further pursued by Mossinghoff and Trudgian \cite{MT}.

%%%%%%%%%%%%%%%%%%%%%%%%%%%%%%%%%%%%%%%%%%%%%%%%%%%%%%%%%%%%%%%%%%%%%%%%%%%%%%%
%  2
\section{The theory of $M$-functions and the statement of the main result}
%%%%%%%%%%%%%%%%%%%%%%%%%%%%%%%%%%%%%%%%%%%%%%%%%%%%%%%%%%%%%%%%%%%%%%%%%%%%%%%

The result \eqref{BJ-result} of Bohr and Jessen has been generalized to a wider class
of zeta-functions.   The existence of the limit on the left-hand side of \eqref{BJ-result}
is now generalized to a fairly general class (see \cite{Mats90}).

It is more difficult to prove the integral expression like the right-hand side of
\eqref{BJ-result}.   The case of Dirichlet $L$-functions $L(s,\chi)$ is essentially the
same as in the case of $\zeta(s)$ (see Joyner \cite{Joyn86}).   The case of Dedekind
zeta-functions of algebraic number fields was studied by the author \cite{Mats91} \cite{Mat92} \cite{Mats07}.
The case of automorphic $L$-functions attached to ${\rm SL}(2,\mathbb{Z})$ or its 
congruence subgroups was established recently in \cite{MaUm19} \cite{MaUm20}.

All of those generalizations consider the situation when $t=\Im s$ varies
(like the left-hand side of \eqref{BJ-result}).   When we 
treat more general $L$-functions, various other aspects can be considered.
In 2008, Ihara \cite{Iha08} studied the $\chi$-aspect for $L$-functions defined on
number fields or function fields.
His study was then further refined in a series of papers of Ihara and the author
\cite{IM10} \cite{IM11} \cite{IM11a} \cite{IM14}.
Let us quote a result proved in \cite{IM11}.

\begin{theorem}\label{IM-th}
Let $s=\sigma+it\in\mathbb{C}$ with $\sigma>1/2$.    There exists an explicitly constructable density function $M_{\sigma}(w)$, continuous and non-negative, for
which
\begin{align}\label{IM-formula}
{\rm Avg}_{\chi}\Phi(\log L(s,\chi))=\int_{\mathbb{C}}M_{\sigma}(w)\Phi(w)|dw|
\end{align}
holds, where ${\rm Avg}_{\chi}$ stands for some average with respect to characters, 
$|dw|=dudv/(2\pi)$ {\rm (}for $w=u+iv${\rm )}, and $\Phi$ is the test function which is either
{\rm (i)} some continuous function, or {\rm (ii)} the characteristic function of
a compact subset of $\mathbb{C}$ or its complement. 
\end{theorem}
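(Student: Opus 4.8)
The plan is to realize $\log L(s,\chi)$, averaged over $\chi$, as a sum of independent contributions indexed by the primes, and to construct $M_\sigma$ as the density of the resulting limiting distribution through its Fourier transform. Starting from the Euler product, one writes formally
$$
\log L(s,\chi)=\sum_p g_p(\chi(p)),\qquad
g_p(\xi)=-\log\bigl(1-\xi p^{-s}\bigr)=\sum_{k\geq 1}\frac{\xi^k}{k\,p^{ks}}.
$$
As $\chi$ ranges over the averaging family, the orthogonality relations for Dirichlet characters force the tuples $(\chi(p))_p$ to equidistribute, each $\chi(p)$ tending to a variable uniformly distributed on the unit circle, with distinct primes asymptotically independent. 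The key analytic input is thus a quantitative equidistribution statement controlling how fast $\mathrm{Avg}_\chi\prod_p\chi(p)^{a_p}$ approaches its limit ($1$ when all $a_p=0$, and $0$ otherwise).

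For each prime $p$ I would define the local density $M_{\sigma,p}(w)$ as the distribution of $g_p(\xi)$ with $\xi$ uniform on the unit circle; its Fourier transform
$$
\widetilde{M_{\sigma,p}}(z)=\frac{1}{2\pi}\int_0^{2\pi}
\exp\Bigl(i\,\Re\bigl(\bar z\,g_p(e^{i\theta})\bigr)\Bigr)\,d\theta
$$
is explicit. The candidate is then the infinite convolution $M_\sigma=\ast_p M_{\sigma,p}$, equivalently the inverse Fourier transform of $\widetilde{M_\sigma}(z)=\prod_p\widetilde{M_{\sigma,p}}(z)$. I would prove that this product converges for every $z$ and, crucially, that $\widetilde{M_\sigma}\in L^1(\mathbb{C})$, so that $M_\sigma$ is well defined and continuous; non-negativity is automatic, since each factor is the Fourier transform of a probability measure and so is the limit.

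To establish \eqref{IM-formula} I would first take the exponential test functions $\Phi(w)=\exp\bigl(i\,\Re(\bar z w)\bigr)$. Here the right-hand side equals $\widetilde{M_\sigma}(z)$ by construction, while the left-hand side converges to the same product: after truncating the Euler product at a parameter $y$ and bounding the tail $\sum_{p>y}g_p(\chi(p))$ on average, the equidistribution input evaluates the main part as the finite product $\prod_{p\leq y}\widetilde{M_{\sigma,p}}(z)$, which tends to $\widetilde{M_\sigma}(z)$. The passage from these exponentials to arbitrary continuous $\Phi$ (case (i)), and thence to the indicator functions of compact sets and their complements (case (ii)), follows by a standard approximation argument using the continuity and decay of $M_\sigma$.

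I expect the main obstacle to lie at $\sigma$ near $1/2$, where the Dirichlet series for $\log L(s,\chi)$ no longer converges absolutely. Controlling $\sum_{p>y}g_p(\chi(p))$ uniformly in mean, while simultaneously securing enough decay of $\widetilde{M_\sigma}(z)$ in $z$ to guarantee that $M_\sigma$ is a continuous function rather than merely a measure, is the delicate analytic heart of the argument.
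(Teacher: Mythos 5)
First, note that the paper does not actually prove Theorem \ref{IM-th}: it is quoted verbatim from Ihara and Matsumoto \cite{IM11} as background motivation, so there is no internal proof to compare your attempt against. Judged against the argument of \cite{IM11} itself (and against the analogous, simpler argument this paper gives for its own Theorem \ref{thm-main}), your outline follows the right strategy: Euler-factor decomposition, local densities carried by the curves $\theta\mapsto -\log(1-e^{i\theta}p^{-s})$, the infinite convolution controlled through the product of Fourier transforms, Jessen--Wintner type decay to put $\widetilde{M_\sigma}$ in $L^1$ and make $M_\sigma$ continuous, orthogonality and equidistribution of the tuples $(\chi(p))_p$ on the left-hand side, and a final approximation step to pass from continuous $\Phi$ to indicator functions. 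That is essentially the same route as the source.

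The gap is that what you have written is a plan rather than a proof, and the single step you defer --- controlling the tail $\sum_{p>y}g_p(\chi(p))$ on average for $1/2<\sigma\le 1$ --- is precisely where all the substance of \cite{IM11} lies (their Sections 5--8, which the present paper explicitly says it can avoid because its own series for $f(\alpha)$ converges absolutely). Two concrete issues are left untouched. First, for $1/2<\sigma\le 1$ the quantity $\log L(s,\chi)$ is not defined by the Euler product at all; one must choose a branch and contend with possible zeros of $L(s,\chi)$, and in \cite{IM11} the results near the critical line are unconditional only over function fields, the number-field case requiring GRH for some of the averages. Second, the equidistribution input for $\mathrm{Avg}_\chi\prod_p\chi(p)^{a_p}$ must be quantitative enough to beat the number of primes retained after truncation at $y$, which does not follow from orthogonality alone. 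As it stands, your argument establishes \eqref{IM-formula} only in the region of absolute convergence $\sigma>1$; extending it to $\sigma>1/2$ is the actual content of the theorem.
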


The density function $M_{\sigma}$ is called an $M$-function.
Here we do not give the details how to define ${\rm Avg}_{\chi}$, but in \cite{IM11},
two types of averages were considered.    One of them is a certain average with
respect to Dirichlet characters, and the other is essentially the same as the average
in $t$-aspect like \eqref{BJ-result}.   In this sense, $F_{\sigma}$ in \eqref{BJ-result}
may be regarded as an example of $M$-functions.

Since then, various analogues of Theorem \ref{IM-th} were discovered by
Mourtada and Murty \cite{MoMu15}, Akbary and Hamieh \cite{AkHa20},
Lebacque and Zykin \cite{LeZy18}, Matsumoto and Umegaki \cite{MaUm18}, 
Mine \cite{Mine19a} \cite{Mine19b} \cite{Mine20}, and so on.

The aim of the present article is to show the following ``limit theorem'', which is
a generalization of Fujii's 
\eqref{Fujii-result} in the framework of the theory of $M$-functions.

\begin{theorem}\label{thm-main}
We assume the LIC.
There exists an explicitly constructable density function {\rm (}$M$-function{\rm )} 
$M:\mathbb{C}\to\mathbb{R}_{\geq 0}$, for which
\begin{align}\label{main-formula}
\lim_{X\to\infty}\frac{1}{X}\int_0^X \Phi(f(\alpha))d\alpha=
\int_{\mathbb{C}}M(w)\Phi(w)|dw|
\end{align}
holds for any test function $\Phi:\mathbb{C}\to\mathbb{C}$ which is continuous, or 
which is the characteristic function of either a compact subset of $\mathbb{C}$ or
the complement of such a subset.   The function $M(w)$ is continuous, tends to $0$
when $|w|\to\infty$, $M(\overline{w})=M(w)$, and
\begin{align}
\int_{\mathbb{C}}M(w)|dw|=1.
\end{align}
\end{theorem}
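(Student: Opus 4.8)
The plan is to realize $f(\alpha)$ as a uniformly almost periodic function and to read off its value-distribution through finite Kronecker--Weyl approximation, following the Bohr--Jessen and Ihara--Matsumoto schemes. Writing $a_m=\frac{1}{(1/2+i\gamma_m)(3/2+i\gamma_m)}$, we have $f(\alpha)=\sum_{m=1}^{\infty}a_m e^{i\gamma_m\alpha}$ with $|a_m|\asymp\gamma_m^{-2}$. Since the zero-counting function gives $\#\{m:\gamma_m\leq T\}\asymp T\log T$, the series $\sum_m|a_m|$ converges; hence $f$ is the uniform limit of its partial sums $f_N(\alpha)=\sum_{m\leq N}a_m e^{i\gamma_m\alpha}$, is bounded, and $f(\mathbb{R})$ lies in a fixed disc. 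First I would treat the truncations $f_N$. Under the LIC the reals $\gamma_1,\dots,\gamma_N$ are linearly independent over $\mathbb{Q}$, so by the Kronecker--Weyl equidistribution theorem the point $(\gamma_1\alpha,\dots,\gamma_N\alpha)$ becomes equidistributed modulo $2\pi$ on the torus $\mathbb{T}^N$ as $\alpha$ ranges over $[0,X]$, $X\to\infty$. Consequently, for continuous $\Phi$,
\[
\lim_{X\to\infty}\frac{1}{X}\int_0^X\Phi(f_N(\alpha))\,d\alpha=\frac{1}{(2\pi)^N}\int_{\mathbb{T}^N}\Phi\Bigl(\sum_{m=1}^N a_m e^{i\theta_m}\Bigr)\,d\theta_1\cdots d\theta_N.
\]

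Next I would introduce the probabilistic model attached to the right-hand side: let $(\theta_m)$ be independent and uniform on $[0,2\pi)$, and set $X_N=\sum_{m\leq N}a_m e^{i\theta_m}$, a $\mathbb{C}$-valued random variable whose law $\mu_N$ is exactly the push-forward measure above. A direct computation with the Bessel integral $\frac{1}{2\pi}\int_0^{2\pi}e^{i\rho\cos\psi}\,d\psi=J_0(\rho)$ shows that the characteristic function of $\mu_N$ equals $\prod_{m\leq N}J_0(|z|\,|a_m|)$. Because $\sum_m|a_m|^2<\infty$, letting $N\to\infty$ gives $X_N\to X_\infty=\sum_m a_m e^{i\theta_m}$ almost surely, $\mu_N\Rightarrow\mu_\infty=\mathrm{law}(X_\infty)$, and the characteristic function converges to the everywhere-convergent product
\[
\widetilde{M}(z)=\prod_{m=1}^{\infty}J_0(|z|\,|a_m|).
\]

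The heart of the proof is to show that $\mu_\infty$ has a good density. I would define $M$ as the (inverse) Fourier transform of $\widetilde{M}$, and the main obstacle is the integrability $\widetilde{M}\in L^1(\mathbb{C})$. Since $\widetilde{M}$ is radial it suffices to bound $\prod_m J_0(r|a_m|)$ for $r=|z|\to\infty$. Using $|J_0|\leq1$ everywhere together with $J_0(x)\leq e^{-x^2/4}$ for small $x$, and retaining only the factors with $r|a_m|$ small, I would obtain
\[
\log|\widetilde{M}(z)|\leq -c\,r^2\sum_{|a_m|\leq\delta/r}|a_m|^2\asymp -c'\,r^{1/2}\log r,
\]
where the last step uses $\sum_{\gamma_m>T}\gamma_m^{-4}\asymp T^{-3}\log T$ with $T\asymp r^{1/2}$. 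This yields decay faster than any power of $r$, so $\widetilde{M}\in L^1$; Fourier inversion then produces a continuous $M$ with $M(w)\to0$ as $|w|\to\infty$ (Riemann--Lebesgue), $M\geq0$ (it is the density of $X_\infty$), $\int_{\mathbb{C}}M(w)\,|dw|=\widetilde{M}(0)=1$, and, since $\widetilde{M}$ is real and radial, $M$ is real with $M(\overline{w})=M(w)$ (in fact rotation-invariant, more than the statement requires). Controlling the Bessel factors carefully across the ranges of $r|a_m|$ to make this decay estimate rigorous is where the real work lies.

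Finally I would assemble the limit theorem and pass to the stated class of test functions. Because $\sup_\alpha|f(\alpha)-f_N(\alpha)|\leq\sum_{m>N}|a_m|\to0$ and $f$ has bounded range, $\Phi\circ f_N\to\Phi\circ f$ uniformly for every bounded continuous $\Phi$; an $\varepsilon/3$ argument then lets me interchange $\lim_N$ and $\lim_X$ and deduce, via the truncated identity above and $\mu_N\Rightarrow\mu_\infty$, that \eqref{main-formula} holds for continuous $\Phi$. For $\Phi$ equal to the characteristic function of a compact set $K$ or of its complement, I would sandwich $\Phi$ between continuous functions and use the continuity of $M$ (so that $\mu_\infty(\partial K)=0$) in a standard portmanteau argument to extend \eqref{main-formula} to these $\Phi$ as well.
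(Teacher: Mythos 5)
Your proposal is correct and follows the same overall architecture as the paper: truncate $f$ to $f_N$, use Kronecker--Weyl under the LIC to identify the $\alpha$-average of $\Phi(f_N)$ with the torus integral, pass to the Fourier side, prove integrability of the limiting Fourier transform, and invert. The one place where you take a genuinely different route is the crucial $L^1$ bound on $\widetilde{M}$. The paper never writes the factors as Bessel functions; instead it invokes a lemma of Jessen and Wintner on oscillatory integrals over convex curves to get $\widetilde{m}_n(z)=O_n(|z|^{-1/2})$ for each \emph{single} factor, and then simply multiplies $N_0\geq 5$ fixed factors to obtain $\widetilde{M}_{N_0}(z)=O(|z|^{-5/2})\in L^1(\mathbb{C})$, which dominates all $\widetilde{M}_N$ with $N\geq N_0$. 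You instead identify $\widetilde{m}_n(z)=J_0(c_n|z|)$ and exploit the quadratic decay of $J_0$ near the origin together with the density $N(T+1)-N(T)\asymp\log T$ of zeta zeros, obtaining the much stronger bound $\log|\widetilde{M}(z)|\ll -|z|^{1/2}\log|z|$. Your estimate buys super-polynomial decay of $\widetilde{M}$ (hence, in principle, quantitative smoothness and decay of $M$ itself), at the cost of needing a \emph{lower} bound on the zero-counting function; the paper's argument is softer, needs no information about how many $\gamma_m$ there are, and is just enough for $L^1$. Your remaining steps (the $\varepsilon/3$ interchange of $\lim_N$ and $\lim_X$ using $\sup_\alpha|f(\alpha)-f_N(\alpha)|\to0$, and the sandwich argument for characteristic functions) coincide with Sections 3--6 of the paper.

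One small inaccuracy: continuity of $M$ does \emph{not} by itself give $\mu_\infty(\partial K)=0$ for every compact $K$, since $\partial K$ may have positive Lebesgue measure; so the portmanteau step as you phrase it is not literally justified for arbitrary compact sets. The paper's own Proposition 3.2 handles this by sandwiching $\mathbf{1}_A$ between continuous functions whose difference has small support and using the boundedness of $M_N$, which carries the same implicit restriction; you should either adopt that sandwich (as you do for the truncated version) uniformly in the final step, or note the regularity hypothesis on $A$ that it requires.
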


\begin{remark}
Choosing $\Phi=\mathbf{1}_R$, we recover Fujii's result \eqref{Fujii-result}.
\end{remark}

The above theorem is an analogue of the absolutely convergent
case in the theory of $M$-functions (that is, an analogue of \cite[Theorem 4.2]{IM11}).
In this sense, our theorem is a rather simple example of $M$-functions.
In particular, complicated mean-value arguments (such as \cite[Sections 5--8]{IM11})
are not necessary.     Still, however, our theorem gives a new evidence of the
ubiquity of $M$-functions. 

%%%%%%%%%%%%%%%%%%%%%%%%%%%%%%%%%%%%%%%%%%%%%%%%%%%%%%%%%%%%%%%%%%%%%%%%%%%%%%%
%  3
%%%%%%%%%%%%%%%%%%%%%%%%%%%%%%%%%%%%%%%%%%%%%%%%%%%%%%%%%%%%%%%%%%%%%%%%%%%%%%%
\section{The finite truncation}

The rest of the present paper is devoted to the proof of Theorem \ref{thm-main}.

We first define the finite truncation of $f(\alpha)$.   
Let $b_m=(1/2+i\gamma_m)(3/2+i\gamma_m)$, $c_m=1/|b_m|$, and $\beta_m=\arg b_m$.   Then
\begin{align}
f(\alpha)=\sum_{m=1}^{\infty}\frac{e^{i\alpha\gamma_m}}{b_m}
=\sum_{m=1}^{\infty}c_m e^{i(\alpha\gamma_m-\beta_m)}.
\end{align}
It is to be noted that
\begin{align}\label{bound_c_m}
c_m=\frac{1}{\sqrt{\frac{1}{4}+\gamma_m^2}\sqrt{\frac{9}{4}+\gamma_m^2}}
\sim \frac{1}{\gamma_m^2}\sim \left(\frac{\log m}{2\pi m}\right)^2
\end{align}
as $m\to\infty$,
hence the above series expression of $f(\alpha)$ is absolutely convergent.

We first consider the finite truncation
\begin{align}
f_N(\alpha)=\sum_{m=1}^{N}c_m e^{i(\alpha\gamma_m-\beta_m)}.
\end{align}
Let $\mathbb{T}$ be the unit circle on $\mathbb{C}$, and
$\mathbb{T}_N=\prod_{m\leq N}\mathbb{T}$.   Define
\begin{align}\label{def_S_N}
S_N(\mathbf{t}_N)=\sum_{m\leq N}c_m t_m,
\end{align}
where $\mathbf{t}_N=(t_1,\ldots,t_N)\in\mathbb{T}_N$.   Then obviously
\begin{align}\label{f_N_S_N}
f_N(\alpha)=S_N(e^{i(\alpha\gamma_1-\beta_1)},\ldots,
e^{i(\alpha\gamma_N-\beta_N)}).
\end{align}
The idea of attaching the mapping 
$S_N:\mathbb{T}_N\to\mathbb{C}$ to $f_N$ goes back to the work of Bohr 
\cite{Bohr15}.    We denote by $d^*\mathbf{t}_N$ the normalized Haar measure
on $\mathbb{T}_N$, that is the product measure of $d^*t=(2\pi)^{-1}d\theta$ for 
$t=e^{i\theta}\in\mathbb{T}$.
The following is an analogue of \cite[Theorem 1]{Iha08}.

\begin{prop}\label{prop_N}
We may construct a function $M_N:\mathbb{C}\to\mathbb{R}_{\geq 0}$, for which
\begin{align}\label{formula_prop_N}
\int_{\mathbb{C}}M_N(w)\Phi(w)|dw|=\int_{\mathbb{T}_N}\Phi(S_N(\mathbf{t}_N))
d^*\mathbf{t}_N
\end{align}
holds for any continuous function $\Phi$ on $\mathbb{C}$.
In particular, choosing $\Phi\equiv 1$ we obtain
\begin{align}\label{total=1}
\int_{\mathbb{C}}M_N(w)|dw|=1.
\end{align}
Also for $N\geq 2$ the function $M_N(w)$ is compactly supported, 
non-negative and $M_N(\overline{w})=M_N(w)$.
\end{prop}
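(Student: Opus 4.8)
The plan is to read the right-hand side of \eqref{formula_prop_N} as an expectation and to realize $M_N$ as the density of a pushforward measure. Write $\nu_m$ for the uniform (Haar) probability measure on the circle $\{|w|=c_m\}\subset\mathbb{C}$, i.e. the law of $c_m t_m$ when $t_m$ is distributed according to $d^*t$. The map $\mathbf{t}_N\mapsto S_N(\mathbf{t}_N)=\sum_{m\le N}c_m t_m$ then pushes $d^*\mathbf{t}_N$ forward to the probability measure
\[
\mu_N:=(S_N)_*\,d^*\mathbf{t}_N=\nu_1*\cdots*\nu_N
\]
on $\mathbb{C}$, and for any continuous $\Phi$ the right-hand side of \eqref{formula_prop_N} is exactly $\int_{\mathbb{C}}\Phi\,d\mu_N$ by the definition of pushforward. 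Thus the whole proposition reduces to showing that, for $N\ge 2$, $\mu_N$ is absolutely continuous with respect to planar Lebesgue measure; its density, renormalized by the factor $2\pi$ coming from $|dw|=dudv/(2\pi)$, is then the sought function $M_N$, and \eqref{formula_prop_N} becomes immediate. As an explicit handle making $M_N$ ``constructable'' I would record the Fourier transform of $\mu_N$: since $\int_{\mathbb{T}}e^{i\,\Re(\bar z\,c t)}\,d^*t=J_0(c|z|)$, one gets the radially symmetric product $\widehat{\mu_N}(z)=\prod_{m\le N}J_0(c_m|z|)$.

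The crux is the absolute continuity, and here the hypothesis $N\ge 2$ is essential, since $\mu_1=\nu_1$ lives on a single circle and is singular. I would prove the base case $N=2$ directly. Parametrizing by angles, $F(\theta_1,\theta_2)=c_1e^{i\theta_1}+c_2e^{i\theta_2}$ has Jacobian
\[
\det\frac{\partial(\Re F,\Im F)}{\partial(\theta_1,\theta_2)}=c_1c_2\sin(\theta_2-\theta_1),
\]
which vanishes only on the null set $C=\{\theta_1\equiv\theta_2\ (\mathrm{mod}\ \pi)\}$. Off $C$ the map $F$ is a local diffeomorphism, hence pulls Lebesgue-null sets back to null sets; since $C$ itself has Haar measure zero, for every planar null set $B$ the preimage $F^{-1}(B)$ is null, so $\mu_2(B)=\mathrm{Haar}(F^{-1}(B))=0$. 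Therefore $\mu_2\ll\mathrm{Leb}$ with a density $g_2\in L^1(\mathbb{C})$, $\|g_2\|_1=1$. The general case follows with no further analysis: $\mu_N=\mu_2*\nu_3*\cdots*\nu_N$, and convolving an $L^1$ function with probability measures stays in $L^1$ (by $\|g*\nu\|_1\le\|g\|_1$), so $\mu_N$ has an $L^1$ density as well. I expect this two-circle step to be the only genuine obstacle; everything else is formal.

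Finally I would read off the remaining assertions from this probabilistic description. Taking $\Phi\equiv 1$ in \eqref{formula_prop_N} gives $\int_{\mathbb{C}}M_N(w)\,|dw|=\mu_N(\mathbb{C})=1$, which is \eqref{total=1}, and non-negativity is automatic because $M_N$ is the density of a positive measure. Compact support comes from the trivial bound $|S_N(\mathbf{t}_N)|\le\sum_{m\le N}c_m$, so that $\mathrm{supp}\,M_N\subseteq\{|w|\le\sum_{m\le N}c_m\}$. The symmetry $M_N(\overline w)=M_N(w)$ follows since complex conjugation $\mathbf{t}_N\mapsto\overline{\mathbf{t}}_N$ preserves $d^*\mathbf{t}_N$ and, the $c_m$ being real, satisfies $S_N(\overline{\mathbf{t}}_N)=\overline{S_N(\mathbf{t}_N)}$, so $\mu_N$ is invariant under $w\mapsto\overline w$. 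I would deliberately refrain from asserting continuity of $M_N$ at this stage: already for $N=2$ the density blows up along the boundary circles $|w|=c_1\pm c_2$, continuity being recovered only in the limit $N\to\infty$ handled in the sequel.
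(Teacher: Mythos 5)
Your proof is correct, and the object you construct is the same one the paper builds: the paper also realizes $M_N$ as the density of the convolution of the uniform distributions on the circles $|w|=c_m$, writing each factor as a radial Dirac delta $m_n(w)=\frac{1}{r}\delta(r-c_n)$ and establishing \eqref{formula_prop_N} by induction on $N$ via $M_N=M_{N-1}*m_N$ and Fubini. The genuine difference is where the analytic weight falls. The paper's computation is formal: $M_1=m_1$ is a distribution rather than a function, and the paper never verifies that the iterated convolution is an honest locally integrable function once $N\ge 2$ --- which is exactly the absolute-continuity statement you isolate as the crux and prove via the Jacobian $c_1c_2\sin(\theta_2-\theta_1)$ of the two-circle map, after which convolution with further probability measures preserves $L^1$. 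So your route buys rigor on the one nontrivial point (a classical step going back to Bohr--Jessen and Jessen--Wintner, which is presumably why the paper treats it as routine), while the paper's delta-function induction buys a more explicit iterated-integral formula; your identity $\widehat{\mu_N}(z)=\prod_{m\le N}J_0(c_m|z|)$ is precisely the paper's $\widetilde{M}_N$ from the Fourier section. Your closing caution is also well placed: continuity of $M_N$ is not asserted in the proposition and genuinely fails for $N=2$, where the density is unbounded on the circles $|w|=c_1\pm c_2$ (it is in fact recovered already for moderate finite $N$, once $\widetilde{M}_N(z)=O(|z|^{-N/2})$ becomes integrable, not only in the limit). The only downstream consequence worth flagging is that the paper's proof of Proposition \ref{prop_char_fct} uses $C_N=\sup M_N$, which on your (correct) description is infinite for small $N$; that is an issue for the paper's later argument, not for your proof of the present statement.
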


\begin{proof}
First consider the case $N=1$.
Let $s_n(t_n)=c_n t_n$.   For $w=re^{i\theta}\in\mathbb{C}$ ($r=|w|, \theta=\arg w$), 
define
\begin{align}
m_n(w)=\frac{1}{r}\delta(r-c_n),
\end{align}
where $\delta(\cdot)$ stands for the usual Dirac delta distribution.   We have
\begin{align}\label{m_n}
&\int_{\mathbb{C}}m_n(w)\Phi(w)|dw|\\
&=\int_0^{2\pi}\int_0^{\infty}m_n(re^{i\theta})\Phi(re^{i\theta})\frac{1}{2\pi}
rdrd\theta\notag\\
%&=\int_0^{2\pi}\int_0^{\infty}\frac{1}{r}\delta(r-c_n)
%\Phi(re^{i\theta})\frac{1}{2\pi}rdrd\theta\notag\\
&=\frac{1}{2\pi}\int_0^{2\pi}d\theta\int_0^{\infty}\delta(r-c_n)
\Phi(re^{i\theta})dr\notag\\
&=\frac{1}{2\pi}\int_0^{2\pi}\Phi(c_n e^{i\theta})d\theta\notag\\
&=\int_{\mathbb{T}}\Phi(s_n (t_n))d^*t_n.\notag
\end{align}
In particular, putting $n=1$ in \eqref{m_n}, we find
\begin{align}
\int_{\mathbb{C}}m_1(w)\Phi(w)|dw|=\int_{\mathbb{T}}\Phi(s_1 (t_1))d^*t_1,
\end{align}
which implies that the case $N=1$ of Proposition \ref{prop_N} is valid with
$M_1=m_1$.

Now we prove the general case by induction on $N$.   Define
\begin{align}\label{def_M_N}
M_N(w)=\int_{\mathbb{C}}M_{N-1}(w')m_N(w-w')|dw'|
\end{align}
for $N\geq 2$.   This is compactly supported, and
\begin{align*}
&\int_{\mathbb{C}}M_N(w)\Phi(w)|dw|\\
&=\int_{\mathbb{C}}\int_{\mathbb{C}}M_{N-1}(w')m_N(w-w')|dw'|\Phi(w)|dw|\\
&=\int_{\mathbb{C}}M_{N-1}(w')|dw'|\int_{\mathbb{C}}m_N(w-w')\Phi(w)|dw|.
\end{align*}
The exchange of the integrations is verified because $M_N$ is compactly supported.
Putting $w''=w-w'$ we see that the inner integral is 
$$
=\int_{\mathbb{C}}m_N(w'')\Phi_{w'}(w'')|dw''| \qquad 
({\rm where} \quad\Phi_{w'}(w'')=\Phi(w''+w')),
$$
which is, by \eqref{m_n}, 
$$
=\int_{\mathbb{T}}\Phi_{w'}(s_N(t_N))d^* t_N.
$$
Therefore
\begin{align*}
&\int_{\mathbb{C}}M_N(w)\Phi(w)|dw|
=\int_{\mathbb{C}}M_{N-1}(w')|dw'|\int_{\mathbb{T}}\Phi_{w'}(s_N(t_N))d^* t_N\\
&\quad=\int_{\mathbb{T}}d^*t_N\int_{\mathbb{C}}M_{N-1}(w')\Phi_{w'}(s_N(t_N))|dw'|\\
&\quad=\int_{\mathbb{T}}d^*t_N\int_{\mathbb{C}}M_{N-1}(w')\Phi_{s_N}(w')|dw'|,
\end{align*}
where $\Phi_{s_N}(w')=\Phi(s_N(t_N)+w')=\Phi_{w'}(s_N(t_N))$.    Using the induction assumption
we see that the right-hand side is
$$
=\int_{\mathbb{T}}d^*t_N\int_{\mathbb{T}_{N-1}}\Phi_{s_N}(S_{N-1}(\mathbf{t}_{N-1}))
d^*\mathbf{t}_{N-1}
=\int_{\mathbb{T}_N}\Phi_{s_N}(S_{N-1}(\mathbf{t}_{N-1}))
d^*\mathbf{t}_{N}.
$$
Since
$$
\Phi_{s_N}(S_{N-1}(\mathbf{t}_{N-1}))=\Phi(S_{N-1}(\mathbf{t}_{N-1})+s_N(t_N))
=\Phi(S_N(\mathbf{t}_N)),
$$
we obtain the assertion of the proposition.
\end{proof}

The following two propositions are analogues of
\cite[Remark 3.2 and Remark 3.3]{IM11}.    For any $A\subset\mathbb{C}$, by $\mathbf{1}_A$
we denote the characteristic function of $A$.
By ${\rm Supp}(\phi)$ we mean the support of a function $\phi$.

\begin{prop}\label{prop_char_fct}
The formula \eqref{formula_prop_N} is valid when $\Phi=\mathbf{1}_A$, 
where $A$ is either
a compact subset of $\mathbb{C}$ or the complement of such a subset.
\end{prop}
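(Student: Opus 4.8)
The plan is to pass from the continuous test functions handled in Proposition \ref{prop_N} to characteristic functions by approximation, the limiting step being legitimate because $M_N$ is compactly supported and $S_N$ has bounded range. Throughout I take $N\geq 2$, so that $M_N$ is a genuine non-negative integrable function of compact support; the case $N=1$ is obtained by the identical approximation applied to \eqref{m_n}, the left-hand side there being read as $(2\pi)^{-1}\int_0^{2\pi}\mathbf{1}_A(c_1e^{i\theta})\,d\theta$. Note that $S_N(\mathbf{t}_N)=\sum_{m\leq N}c_m t_m$ satisfies $|S_N(\mathbf{t}_N)|\leq\sum_{m\leq N}c_m$, so its image is contained in a fixed compact disc.

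First suppose $A$ is compact. I would choose continuous functions decreasing to $\mathbf{1}_A$, for instance $\Phi_\delta(w)=\max\bigl(0,\,1-\delta^{-1}\operatorname{dist}(w,A)\bigr)$ for $\delta>0$; each $\Phi_\delta$ is continuous and compactly supported with $0\leq\Phi_\delta\leq 1$, and $\Phi_\delta\downarrow\mathbf{1}_A$ pointwise as $\delta\to 0$. Applying \eqref{formula_prop_N} to each $\Phi_\delta$ and then letting $\delta\to 0$, the left-hand side tends to $\int_{\mathbb{C}}M_N(w)\mathbf{1}_A(w)\,|dw|$ by dominated convergence with dominating function $M_N$, while the right-hand side tends to $\int_{\mathbb{T}_N}\mathbf{1}_A(S_N(\mathbf{t}_N))\,d^*\mathbf{t}_N$ by dominated convergence on the probability space $(\mathbb{T}_N,d^*\mathbf{t}_N)$ with constant dominating function $1$. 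This gives \eqref{formula_prop_N} for $\Phi=\mathbf{1}_A$.

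For the complement $B=\mathbb{C}\setminus A$ of a compact set $A$, I would argue by linearity: the constant function $\Phi\equiv 1$ satisfies \eqref{formula_prop_N} by \eqref{total=1} together with the fact that $d^*\mathbf{t}_N$ is a probability measure, so writing $\mathbf{1}_B=1-\mathbf{1}_A$ and subtracting the compact case already proved yields the formula for $\mathbf{1}_B$. I expect no real obstacle here; the only point needing care is the justification of the two limit exchanges, and this is precisely where the compact support of $M_N$ and the finiteness of the Haar measure enter.
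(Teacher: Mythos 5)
Your proof is correct, and it follows the same overall strategy as the paper (approximate $\mathbf{1}_A$ by continuous compactly supported functions to which Proposition \ref{prop_N} applies, then pass to the limit; reduce the complement case to the compact case via $\Phi\equiv 1$). The limiting mechanism, however, is different. The paper uses a two-sided sandwich $0\leq\phi_1\leq\mathbf{1}_A\leq\phi_2\leq 1$ with ${\rm Vol}({\rm Supp}(\phi_2-\phi_1))<\varepsilon$ and bounds the discrepancy on the $M_N$-side by $C_N\varepsilon$ with $C_N=\sup M_N$; you instead take a single monotone family $\Phi_\delta\downarrow\mathbf{1}_A$ and invoke dominated (or monotone) convergence on both sides at once. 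Your route buys two things: it needs only $M_N\in L^1$, which is immediate from \eqref{total=1} and non-negativity, whereas the paper's argument tacitly requires $\sup M_N<\infty$ --- a point that is not entirely innocent for small $N$, since the convolution of two circle measures has integrable singularities near $|w|=c_1\pm c_2$; and it dispenses with the auxiliary lower approximant $\phi_1$, since the same family $\Phi_\delta$ controls both the $M_N$-integral and the $\mathbb{T}_N$-integral simultaneously. Your explicit treatment of $N=1$ (where $M_1=m_1$ is a distribution rather than a function) is also more careful than the paper, which restricts some of its assertions to $N\geq 2$ without comment in the proof. The one cosmetic remark is that the monotonicity $\Phi_\delta\downarrow\mathbf{1}_A$ is not even needed once you invoke dominated convergence; pointwise convergence with the dominating functions $M_N$ and the constant $1$ suffices, exactly as you say.
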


\begin{proof}
It is enough to prove the case when $A$ is compact.   
Let $\phi_1,\phi_2$ be continuous non-negative functions, defined on $\mathbb{C}$,
compactly supported, satisfying $0\leq \phi_1\leq\mathbf{1}_A\leq\phi_2\leq 1$ and
${\rm Vol}({\rm Supp}(\phi_2-\phi_1))<\varepsilon$ (where ``Vol'' denotes the volume
measured by $d^*\mathbf{t}_{N}$).   Then
$$
\int_{\mathbb{C}}M_N(w)(\mathbf{1}_A-\phi_1)(w)|dw|<C_N\varepsilon,\;
\int_{\mathbb{C}}M_N(w)(\phi_2-\mathbf{1}_A)(w)|dw|<C_N\varepsilon,
$$
where $C_N=\sup\{M_N(w)\}$.   Therefore, using Proposition \ref{prop_N} we have
\begin{align*}
&\int_{\mathbb{C}} M_N(w)\mathbf{1}_A(w)|dw|-C_N\varepsilon
\leq \int_{\mathbb{C}} M_N(w)\phi_1(w)|dw|\\
&=\int_{\mathbb{T}_N}\phi_1(S_N(\mathbf{t}_N))d^*\mathbf{t}_N
\leq \int_{\mathbb{T}_N}\mathbf{1}_A(S_N(\mathbf{t}_N))d^*\mathbf{t}_N\\
&\leq \int_{\mathbb{T}_N}\phi_2(S_N(\mathbf{t}_N))d^*\mathbf{t}_N
=\int_{\mathbb{C}} M_N(w)\phi_2(w)|dw|\\
&\leq \int_{\mathbb{C}} M_N(w)\mathbf{1}_A(w)|dw|+C_N\varepsilon,
\end{align*}
from which the desired assertion follows.
\end{proof}

In the proof of Proposition \ref{prop_N} we have shown that $M_N$ is compactly supported.
Now we show more explicitly 
what is the support.    

\begin{prop}\label{prop_support}
The support of $M_N$ is the image of the mapping $S_N$.
\end{prop}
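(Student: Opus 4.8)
The plan is to read the identity \eqref{formula_prop_N} as the assertion that the measure $M_N(w)\,|dw|$ is the image (push-forward) under $S_N$ of the normalized Haar measure $d^*\mathbf{t}_N$ on $\mathbb{T}_N$. Write $K=S_N(\mathbb{T}_N)$ for the image. Since $\mathbb{T}_N$ is compact and $S_N$ is continuous, $K$ is compact, hence closed. (Concretely $K$ is the Minkowski sum $c_1\mathbb{T}+\cdots+c_N\mathbb{T}$ of the $N$ circles, but we shall not need this description.) It then suffices to establish the two inclusions ${\rm Supp}(M_N)\subseteq K$ and $K\subseteq{\rm Supp}(M_N)$.

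For ${\rm Supp}(M_N)\subseteq K$, I would take any $w_0\notin K$. As $K$ is closed there is an open ball $B=B(w_0,\delta)$ with $B\cap K=\varnothing$. For every continuous $\Phi\ge 0$ supported in $B$ one has $\Phi(S_N(\mathbf{t}_N))=0$ for all $\mathbf{t}_N\in\mathbb{T}_N$, because $S_N(\mathbf{t}_N)\in K$; hence \eqref{formula_prop_N} gives $\int_{\mathbb{C}}M_N(w)\Phi(w)\,|dw|=0$. Since $M_N\ge 0$ and this holds for all such $\Phi$, $M_N$ vanishes on $B$, so $w_0\notin{\rm Supp}(M_N)$.

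For the reverse inclusion, fix $w_0=S_N(\mathbf{t}_N^{0})\in K$ and an arbitrary ball $B=B(w_0,\delta)$. The preimage $S_N^{-1}(B)$ is open in $\mathbb{T}_N$ by continuity of $S_N$ and contains $\mathbf{t}_N^{0}$, so it is a nonempty open set and therefore has strictly positive Haar measure. Choosing a continuous $\Phi$ with $\mathbf{1}_{B(w_0,\delta/2)}\le\Phi\le\mathbf{1}_{B}$ and invoking \eqref{formula_prop_N} (equivalently, Proposition \ref{prop_char_fct} with a compact ball), I obtain
\[
\int_{B}M_N(w)\,|dw|\ \ge\ \int_{\mathbb{C}}M_N(w)\Phi(w)\,|dw|
=\int_{\mathbb{T}_N}\Phi(S_N(\mathbf{t}_N))\,d^*\mathbf{t}_N
\ \ge\ d^*\mathbf{t}_N\!\left(S_N^{-1}(B(w_0,\delta/2))\right)>0 .
\]
Thus $M_N$ carries positive mass in every neighbourhood of $w_0$, whence $w_0\in{\rm Supp}(M_N)$. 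Combining the two inclusions yields ${\rm Supp}(M_N)=K$.

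The only genuinely delicate step is the reverse inclusion, and it rests on two facts: that $S_N$ is continuous (so preimages of balls are open) and that the normalized Haar measure gives strictly positive mass to every nonempty open subset of the compact group $\mathbb{T}_N$. I would also flag a low-$N$ caveat: for $N=1$ the object $M_1=m_1$ is a distribution supported on the circle $|w|=c_1=S_1(\mathbb{T})$, so there the statement is read in the distributional sense; for $N\ge 2$, where $M_N$ is a genuine nonnegative function, ${\rm Supp}(M_N)$ means $\overline{\{w:M_N(w)\neq 0\}}$ and the argument above applies directly.
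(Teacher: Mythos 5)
Your proof is correct and follows essentially the same route as the paper: both rest on reading \eqref{formula_prop_N} (via Proposition \ref{prop_char_fct}) as the statement that $M_N(w)\,|dw|$ is the push-forward of Haar measure under $S_N$, so that $\int_A M_N(w)\,|dw|={\rm Vol}(S_N^{-1}(A))$ for compact $A$. The paper states this identity and leaves the two inclusions implicit, whereas you spell them out (including the positivity of Haar measure on nonempty open sets for the reverse inclusion), which is a welcome elaboration rather than a different argument.
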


\begin{proof}
Let $A$ be a compact subset of $\mathbb{C}$.    We can use \eqref{formula_prop_N} with
$\Phi=\mathbf{1}_A$ because of Proposition \ref{prop_char_fct}.    Then
\begin{align}
\int_A M_N(w)|dw|=\int_{\mathbb{T}_N}\mathbf{1}_A(S_N(\mathbf{t}_N))d^*\mathbf{t}_N
={\rm Vol}(S_N^{-1}(A)),
\end{align}
which implies the proposition.
\end{proof}

%%%%%%%%%%%%%%%%%%%%%%%%%%%%%%%%%%%%%%%%%%%%%%%%%%%%%%%%%%%%%%%%%%%%%%%%%%%%%%%%
% 4
%%%%%%%%%%%%%%%%%%%%%%%%%%%%%%%%%%%%%%%%%%%%%%%%%%%%%%%%%%%%%%%%%%%%%%%%%%%%%%%%
\section{The finite-truncation version of the theorem}

The aim of this section is to prove

\begin{prop}\label{prop_lim}
Under the assumption of the LIC, we have
\begin{align}
\lim_{X\to\infty}\frac{1}{X}\int_0^X \Phi(f_N(\alpha))d\alpha=
\int_{\mathbb{T}_N}\Phi(S_N(\mathbf{t}_N))
d^*\mathbf{t}_N
\end{align}
for any continuous function $\Phi$ on $\mathbb{C}$.
\end{prop}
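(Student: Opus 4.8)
The plan is to recognize this statement as an instance of the Kronecker--Weyl equidistribution theorem on the compact torus $\mathbb{T}_N$. Write $\mathbf{e}(\alpha)=(e^{i(\alpha\gamma_1-\beta_1)},\ldots,e^{i(\alpha\gamma_N-\beta_N)})\in\mathbb{T}_N$, so that by \eqref{f_N_S_N} we have $f_N(\alpha)=S_N(\mathbf{e}(\alpha))$. Setting $g=\Phi\circ S_N$, the map $g:\mathbb{T}_N\to\mathbb{C}$ is continuous, being a composition of the continuous maps $S_N$ and $\Phi$, and the assertion to be proved is exactly
$$
\lim_{X\to\infty}\frac{1}{X}\int_0^X g(\mathbf{e}(\alpha))\,d\alpha=\int_{\mathbb{T}_N}g(\mathbf{t}_N)\,d^*\mathbf{t}_N.
$$
Thus the proposition for a general continuous $\Phi$ reduces to establishing this time-average/space-average identity for every continuous $g$ on $\mathbb{T}_N$.

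First I would verify the identity for the characters of $\mathbb{T}_N$, i.e.\ for $g=\chi_{\mathbf{k}}$ with $\chi_{\mathbf{k}}(\mathbf{t}_N)=t_1^{k_1}\cdots t_N^{k_N}$ and $\mathbf{k}=(k_1,\ldots,k_N)\in\mathbb{Z}^N$. A direct computation gives
$$
\chi_{\mathbf{k}}(\mathbf{e}(\alpha))=e^{-i\sum_{m\leq N}k_m\beta_m}\,e^{i\alpha\sum_{m\leq N}k_m\gamma_m}.
$$
When $\mathbf{k}=\mathbf{0}$ both the time-average and the Haar integral equal $1$. When $\mathbf{k}\neq\mathbf{0}$, the Haar integral $\int_{\mathbb{T}_N}\chi_{\mathbf{k}}\,d^*\mathbf{t}_N$ vanishes, and here the LIC enters decisively: the linear independence of $\gamma_1,\ldots,\gamma_N$ over $\mathbb{Q}$ forces $\sum_{m\leq N}k_m\gamma_m\neq 0$, whence $\frac{1}{X}\int_0^X e^{i\alpha\sum_{m\leq N}k_m\gamma_m}\,d\alpha=O(1/X)\to 0$. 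So the identity holds for every character, and by linearity for every trigonometric polynomial on $\mathbb{T}_N$. The phase shifts $\beta_m$ are harmless, contributing only the unimodular constant above, which cancels because the Haar measure is translation-invariant on the torus.

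Finally I would pass from trigonometric polynomials to an arbitrary continuous $g$ by density. By the Stone--Weierstrass theorem the trigonometric polynomials are uniformly dense in $C(\mathbb{T}_N)$, so given $\varepsilon>0$ one may choose such a polynomial $P$ with $\|g-P\|_\infty<\varepsilon$. Both $h\mapsto\frac{1}{X}\int_0^X h(\mathbf{e}(\alpha))\,d\alpha$ and $h\mapsto\int_{\mathbb{T}_N}h\,d^*\mathbf{t}_N$ are linear functionals of norm at most $1$ with respect to the sup-norm, so the contributions of $g-P$ to each side are $O(\varepsilon)$ uniformly in $X$, while the already-proved identity for $P$ makes the two averages of $P$ coincide in the limit. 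An $\varepsilon/3$-type estimate then yields the claim for $g$, completing the proof. The only genuinely delicate point is the nonvanishing of $\sum_{m\leq N}k_m\gamma_m$ for $\mathbf{k}\neq\mathbf{0}$, which is precisely the content of the LIC; every other step is a routine compactness-and-approximation argument on a compact abelian group.
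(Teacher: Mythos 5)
Your proof is correct and follows essentially the same route as the paper: the paper reduces the statement to the equidistribution of $(e^{i(\alpha\gamma_1-\beta_1)},\ldots,e^{i(\alpha\gamma_N-\beta_N)})$ on $\mathbb{T}_N$ and invokes Weyl's criterion, which is precisely your character computation (with the LIC guaranteeing $\sum_m k_m\gamma_m\neq 0$) combined with the Stone--Weierstrass density step that you write out explicitly.
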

Then, combining this with Proposition \ref{prop_N}, we have
\begin{align}\label{finite-main-formula}
\lim_{X\to\infty}\frac{1}{X}\int_0^X \Phi(f_N(\alpha))d\alpha=
\int_{\mathbb{C}}M_N(w)\Phi(w)|dw|
\end{align}
for any continuous $\Phi$,
which is the ``finite-truncation'' analogue of our main theorem.

%The LHS of Proposition \ref{prop_lim} can be written as
%\begin{align}
%\lim_{X\to\infty}\frac{1}{X}\int_0^X \Phi(S_N(e^{i(\alpha\gamma_1-\beta_1)}
%,\ldots,
%e^{i(\alpha\gamma_N-\beta_N)}))d\alpha,
%\end{align}
In view of \eqref{f_N_S_N}, 
in order to prove Proposition \ref{prop_lim}, it is enough to prove the following

\begin{prop}\label{prop_lim_2}
Under the assumption of the LIC, we have
\begin{align}
\lim_{X\to\infty}\frac{1}{X}\int_0^X \Psi(e^{i(\alpha\gamma_1-\beta_1)}
,\ldots,e^{i(\alpha\gamma_N-\beta_N)})d\alpha=
\int_{\mathbb{T}_N}\Psi(\mathbf{t}_N)d^*\mathbf{t}_N
\end{align}
holds for any continuous $\Psi:\mathbb{T}_N\to\mathbb{C}$.
\end{prop}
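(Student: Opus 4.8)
The plan is to prove the statement by reducing it to the case of characters of the torus and then invoking the Weyl--Kronecker equidistribution principle, with the LIC providing exactly the non-degeneracy that makes the relevant oscillatory averages vanish. Write $L_X(\Psi)=\frac1X\int_0^X\Psi(e^{i(\alpha\gamma_1-\beta_1)},\ldots,e^{i(\alpha\gamma_N-\beta_N)})\,d\alpha$ and $L(\Psi)=\int_{\mathbb{T}_N}\Psi(\mathbf{t}_N)\,d^*\mathbf{t}_N$. Both are linear in $\Psi$ and satisfy $|L_X(\Psi)|\le\|\Psi\|_\infty$ and $|L(\Psi)|\le\|\Psi\|_\infty$, where $\|\cdot\|_\infty$ denotes the sup-norm on $C(\mathbb{T}_N)$. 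Hence it suffices to verify $L_X(\Psi)\to L(\Psi)$ on a dense subset of $C(\mathbb{T}_N)$ and then extend by a uniform-approximation argument.

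For the dense subset I would take the characters of $\mathbb{T}_N$, namely $\chi_{\mathbf{k}}(\mathbf{t}_N)=\prod_{m\le N}t_m^{k_m}$ for $\mathbf{k}=(k_1,\ldots,k_N)\in\mathbb{Z}^N$. These form a unital, self-adjoint, point-separating subalgebra of $C(\mathbb{T}_N)$, so their linear span (the trigonometric polynomials) is dense by the Stone--Weierstrass theorem. Substituting $t_m=e^{i(\alpha\gamma_m-\beta_m)}$ gives $\chi_{\mathbf{k}}(\ldots)=e^{-i\sum_m k_m\beta_m}\,e^{i\alpha\sum_m k_m\gamma_m}$, so that $L_X(\chi_{\mathbf{k}})=e^{-i\sum_m k_m\beta_m}\cdot\frac1X\int_0^X e^{i\alpha\sum_m k_m\gamma_m}\,d\alpha$.

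Here is where the LIC enters, and it is the one genuinely essential step. If $\mathbf{k}=\mathbf{0}$, then $\chi_{\mathbf{k}}\equiv1$ and both $L_X$ and $L$ return $1$. If $\mathbf{k}\ne\mathbf{0}$, then the linear independence of $\gamma_1,\ldots,\gamma_N$ over $\mathbb{Q}$ forces $c:=\sum_m k_m\gamma_m\ne0$, whence $\frac1X\int_0^X e^{i\alpha c}\,d\alpha=\frac{e^{icX}-1}{icX}$ has modulus at most $2/(|c|X)\to0$; on the other side $L(\chi_{\mathbf{k}})=\prod_m\int_{\mathbb{T}}t_m^{k_m}\,d^*t_m=0$. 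Thus $L_X(\chi_{\mathbf{k}})\to L(\chi_{\mathbf{k}})$ for every $\mathbf{k}$, and by linearity the convergence holds for every trigonometric polynomial.

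Finally I would pass from trigonometric polynomials to arbitrary continuous $\Psi$. Given $\varepsilon>0$, choose a trigonometric polynomial $P$ with $\|\Psi-P\|_\infty<\varepsilon$; then $|L_X(\Psi)-L(\Psi)|\le|L_X(\Psi-P)|+|L_X(P)-L(P)|+|L(P-\Psi)|\le2\varepsilon+|L_X(P)-L(P)|$, and the middle term tends to $0$ as $X\to\infty$ by the previous paragraph. This yields $\limsup_{X\to\infty}|L_X(\Psi)-L(\Psi)|\le2\varepsilon$ for every $\varepsilon>0$, which proves the claim and in particular the existence of the limit. I do not expect any serious obstacle: the only point requiring care is the decay of the oscillatory average, which is precisely where the non-vanishing of $\sum_m k_m\gamma_m$ is indispensable. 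Without the LIC, a resonance $\sum_m k_m\gamma_m=0$ with $\mathbf{k}\ne\mathbf{0}$ would prevent $L_X(\chi_{\mathbf{k}})$ from tending to $L(\chi_{\mathbf{k}})=0$ and would destroy the equidistribution.
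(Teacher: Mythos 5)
Your proposal is correct and follows essentially the same route as the paper: reduce to the characters $t_1^{n_1}\cdots t_N^{n_N}$, use the LIC to guarantee $\sum_m n_m\gamma_m\neq 0$ so the oscillatory average vanishes, and conclude for general continuous $\Psi$. The only difference is that the paper cites Weyl's criterion (Kuipers--Niederreiter) for the reduction to characters, whereas you prove that reduction inline via Stone--Weierstrass and the uniform bounds $|L_X(\Psi)|\le\|\Psi\|_\infty$, $|L(\Psi)|\le\|\Psi\|_\infty$.
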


This is an analogue of \cite[Lemma 4.3.1]{Iha08}.

\begin{proof}
Write $\mathbf{t}_N=(e^{ i\theta_1},\ldots,e^{ i\theta_N})$.
Then the right-hand side of Proposition \ref{prop_lim_2} is
$$
= \frac{1}{(2\pi)^N}\int_0^{2\pi}\cdots\int_0^{2\pi}
\Psi(e^{ i\theta_1},\ldots,e^{ i\theta_N})
d\theta_1\cdots d\theta_N.
$$
To show that this is equal to the left-hand side, by Weyl's criterion (see 
\cite[Chapter 1, Theorem 9.9]{KN}), it is enough to show the equality when
$\Psi=t_1^{n_1}\cdots t_N^{n_N}$ for any
$(n_1,\ldots,n_N)\in\mathbb{Z}^N\setminus\{(0,\ldots,0)\}$.
But in this case, since
$\Psi(e^{ i\theta_1},\ldots,e^{ i\theta_N})=e^{i(n_1\theta_1+\cdots+n_N\theta_N)}$,
the right-hand side is clearly equal to $0$.
The left-hand side is
\begin{align*}
&=\lim_{X\to\infty}\frac{1}{X}\int_0^X
e^{in_1(\alpha\gamma_1-\beta_1)+\cdots+in_N(\alpha\gamma_N-\beta_N)}d\alpha\\
&=\lim_{X\to\infty}\frac{1}{X}e^{-i(n_1\beta_1+\cdots+n_N\beta_N)}
\int_0^X e^{i\alpha(n_1\gamma_1+\cdots+n_N\gamma_N)}d\alpha.
\end{align*}
Since we assume the LIC, $n_1\gamma_1+\cdots+n_N\gamma_N\neq 0$ because
$(n_1,\ldots,n_N)\neq(0,\ldots,0)$.   Therefore the above is
$$
=\lim_{X\to\infty}\frac{1}{X}e^{-i(n_1\beta_1+\cdots+n_N\beta_N)}
\cdot\frac{e^{iX(n_1\beta_1+\cdots+n_N\beta_N)}-1}{i(n_1\beta_1+\cdots+n_N\beta_N)}
$$
which is also equal to $0$.   The proposition is proved.
\end{proof}

%%%%%%%%%%%%%%%%%%%%%%%%%%%%%%%%%%%%%%%%%%%%%%%%%%%%%%%%%%%%%%%%%%%%%%%%%%%%%%%%
% 5
%%%%%%%%%%%%%%%%%%%%%%%%%%%%%%%%%%%%%%%%%%%%%%%%%%%%%%%%%%%%%%%%%%%%%%%%%%%%%%%
\section{The existence of the $M$-function}

In this section we prove the existence of the limit function
\begin{align}
M(w)=\lim_{N\to\infty}M_N(w).
\end{align}

For this purpose we consider the Fourier transform.   We follow the argument
on pp.644-647 in \cite{IM11}, which is based on the ideas of Ihara \cite{Iha08}
and of the author \cite{Mat92}.

Let $\psi_z(w)=\exp(i\Re (\overline{z} w))$, and define the Fourier transform of
$m_n$ as
\begin{align}
\widetilde{m}_n(z)=\int_{\mathbb{C}}m_n(w)\psi_z(w)|dw|.
\end{align}
Applying \eqref{m_n} with $\Phi=\psi_z$, we see that the right-hand side of the above is
\begin{align*}
&=\int_{\mathbb{T}}\psi_z(s_n(t_n))d^*t_n
=\frac{1}{2\pi}\int_0^{2\pi}\psi_z(c_n e^{i\theta_n})d\theta_n\\
&=\frac{1}{2\pi}\int_0^{2\pi}\exp(i\Re(\overline{z}\cdot c_ne^{i\theta_n}))d\theta_n.
\end{align*}
Writing
$\overline{z}\cdot c_ne^{i\theta_n}=c_n |z|e^{i(\theta_n-\tau)}$ ($\tau=\arg z$), 
we have
\begin{align}\label{inRHS}
\Re(\overline{z}\cdot c_ne^{i\theta_n})
=c_n|z|\cos(\theta_n-\tau)
=c_n|z|(\cos\theta_n\cos\tau+\sin\theta_n\sin\tau)
\end{align}
and so
\begin{align}\label{RHS}
\widetilde{m}_n(z)=\frac{1}{2\pi}\int_0^{2\pi}\exp(ic_n|z|(\cos\theta_n\cos\tau
+\sin\theta_n\sin\tau)d\theta_n. 
\end{align}
Now quote:

\begin{lemma}
{\rm (Jessen and Wintner \cite[Theorem 12]{JW35})}
Let $C$ be a closed convex curve in $\mathbb{C}$
parametrized by $x(\theta)=(\xi_1(\theta),\xi_2(\theta))$, 
$z=|z|e^{i\tau}\in\mathbb{C}$, and let
$g_{\tau}(\theta)=\xi_1(\theta)\cos\tau+\xi_2(\theta)\sin\tau$.
Assume that $\xi_1,\xi_2\in C^2$ and $g_{\tau}^{\prime\prime}(\theta)$ has (for each
fixed $\tau$) exactly two zeros on $C$.   Then
\begin{align}
\int_C \exp(i|z|g_{\tau}(\theta))d\theta = O(|z|^{-1/2}),
\end{align}
where the implied constant depends on $C$.
\end{lemma}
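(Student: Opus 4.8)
The statement to prove is that $\int_C \exp(i|z|g_\tau(\theta))\,d\theta = O(|z|^{-1/2})$, where $C$ is a closed convex $C^2$ curve parametrized by $x(\theta)=(\xi_1(\theta),\xi_2(\theta))$ and $g_\tau(\theta)=\xi_1(\theta)\cos\tau+\xi_2(\theta)\sin\tau$ is the projection of the curve onto the direction $(\cos\tau,\sin\tau)$. The hypothesis is that $g_\tau''$ has exactly two zeros on $C$ for each fixed $\tau$.

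\medskip

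The plan is to recognize this as a classical one-dimensional stationary phase estimate and to control the phase through its second derivative. First I would observe that $g_\tau(\theta)$ is the height function of the convex curve in the direction $\tau$; since $C$ is closed and convex, $g_\tau$ attains its maximum and minimum each once, so its derivative $g_\tau'$ vanishes at exactly two points $\theta_1,\theta_2$ (the two ``stationary points'' where the tangent to $C$ is perpendicular to the direction $\tau$). The stated hypothesis that $g_\tau''$ has exactly two zeros identifies the two points where the curvature-type quantity changes sign; away from small neighborhoods of these points $|g_\tau'|$ is bounded below, so there integration by parts (the non-stationary phase principle) gives a contribution of size $O(|z|^{-1})$, which is even smaller than the claimed bound. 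The main work is therefore confined to neighborhoods of the two stationary points of $g_\tau$.

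\medskip

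Near each stationary point, the standard stationary phase argument gives the $|z|^{-1/2}$ rate. Concretely, on a neighborhood where $g_\tau'$ vanishes simply (so that $g_\tau''\neq 0$, using convexity to guarantee a genuine extremum rather than a degenerate critical point), I would either apply van der Corput's second-derivative test directly, which states that $\int_a^b e^{i\lambda h(\theta)}\,d\theta = O(\lambda^{-1/2})$ whenever $|h''|\geq \kappa>0$ on $[a,b]$, or equivalently change variables to put the phase in the normal form $u^2$ and reduce to a Fresnel integral. Summing the two stationary-point contributions and the smaller non-stationary contribution yields the total bound $O(|z|^{-1/2})$, with the implied constant depending only on $C$ through bounds on $\xi_1,\xi_2$ and their first two derivatives (in particular through a lower bound on the curvature of $C$).

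\medskip

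The hard part will be justifying the uniformity in $\tau$ and the clean separation into exactly two stationary regions. The hypothesis on the zeros of $g_\tau''$ is precisely what prevents the stationary points from degenerating or coalescing as $\tau$ varies, so that a single $\tau$-independent lower bound on $|g_\tau''|$ near the stationary points and a single lower bound on $|g_\tau'|$ away from them can be extracted from the compactness of $C$ and of the parameter range $\tau\in[0,2\pi]$. Once these uniform bounds are in hand, the van der Corput estimate applies with a constant depending only on $C$, which is exactly the content of the lemma. I would rely on the cited result of Jessen and Wintner \cite{JW35} for the precise constant bookkeeping rather than reproduce it, since the subsequent application only requires the order of magnitude $O(|z|^{-1/2})$.
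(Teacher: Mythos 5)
The paper does not prove this lemma at all: it is quoted verbatim from Jessen and Wintner \cite[Theorem 12]{JW35}, and in the actual application the curve $C$ is just the circle of radius $c_n$, where the integral is $2\pi J_0(c_n|z|)$ and the bound is classical. Your proposal therefore supplies an argument where the paper supplies only a citation, and the route you sketch --- non-stationary phase away from the critical points of $g_\tau$, van der Corput's second-derivative test near them, compactness in $(\tau,\theta)$ for uniformity --- is the standard and correct one. One step deserves to be made explicit rather than gestured at: the hypothesis is about zeros of $g_\tau''$, not of $g_\tau'$, and the link is Rolle's theorem. Since $g_\tau$ is periodic it has at least two critical points; between consecutive zeros of $g_\tau'$ lies a zero of $g_\tau''$, so ``exactly two zeros of $g_\tau''$'' forces $g_\tau'$ to have exactly two zeros and, since the two zeros of $g_\tau''$ must lie strictly between them, forces $g_\tau''\neq 0$ at each critical point of $g_\tau$. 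That is what makes the stationary points nondegenerate and the second-derivative test applicable; your phrasing about $g_\tau''$ ``identifying where the curvature changes sign'' slightly obscures this. With that interlacing observation, the joint continuity of $(g_\tau',g_\tau'')$ on the compact set $[0,2\pi]\times C$ gives a uniform lower bound on $|g_\tau'|+|g_\tau''|$, and the split into the two regimes yields $O(|z|^{-1/2})$ with a constant depending only on $C$, as claimed.
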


In the present case $\xi_1(\theta)=c_n \cos\theta$,
$x_2(\theta)=c_n\sin\theta$, and $C$ is the circle of radius $c_n$.   Since
$$
g_{\tau}^{\prime\prime}(\theta)
=-c_n(\cos\theta\cos\tau+\sin\theta\sin\tau)=-c_n\cos(\theta-\tau),
$$
the assumption of the lemma is clearly satisfied, and hence by the lemma we have
\begin{align}\label{est_wtm_n}
\widetilde{m}_n(z)=O_n(|z|^{-1/2}).
\end{align}
Now define
\begin{align}\label{def_M_N_tilde}
\widetilde{M}_N(z)=\prod_{n\leq N}\widetilde{m}_n(z).
\end{align}
Then from \eqref{est_wtm_n} and the obvious inequality $|\widetilde{m}_n(z)|\leq 1$
(which immediately follows from \eqref{RHS}), we have
\begin{align}\label{bound-by-N}
\widetilde{M}_N(z)=O_N(|z|^{-N/2})
\end{align}
and
\begin{align}\label{bound-by-1}
|\widetilde{M}_N(z)|\leq 1.
\end{align}
From these inequalities we obtain (i) and (ii) of the following

\begin{prop}\label{prop-4}
Let $N_0\geq 5$.

{\rm (i)} $\widetilde{M}_{N_0}\in L^t$ for any $t\in[1,+\infty]$,

{\rm (ii)} $|\widetilde{M}_N(z)|\leq |\widetilde{M}_{N_0}(z)|$ for all $N\geq N_0$,

{\rm (iii)} $\widetilde{M}_N(z)$ converges to a certain function $\widetilde{M}(z)$
uniformly in any compact subset when $N\to\infty$.
\end{prop}

\begin{proof}[Proof of {\rm (iii)}]
It is clear from \eqref{inRHS} that
\begin{align}
\frac{1}{2\pi}\int_0^{2\pi}\Re(\overline{z}\cdot c_ne^{i\theta_n})d\theta_n =0.
\end{align}
Therefore we can write
\begin{align}
\widetilde{m}_n(z)-1=\frac{1}{2\pi}\int_0^{2\pi}
(\exp(i\Re(\overline{z}\cdot c_ne^{i\theta_n}))-1-\Re(\overline{z}\cdot c_ne^{i\theta_n}))
d\theta_n.
\end{align}
Since $|e^{ix}-1-ix|\ll x^2$ for any real $x$ (by the Taylor expansion for small $|x|$, 
and by the fact $|e^{ix}|=1$ for large $|x|$), we obtain
\begin{align}\label{bound_m_n}
|\widetilde{m}_n(z)-1|
\ll \int_0^{2\pi}|\Re(\overline{z}\cdot c_ne^{i\theta_n})|^2d\theta_n
\ll |z|^2 c_n^2.
\end{align}
Let $N<N'$.   Then
\begin{align*}
|\widetilde{M}_{N'}(z)-\widetilde{M}_N(z)|
&\leq \sum_{j=1}^{N'-N}|\widetilde{M}_{N+j}(z)-\widetilde{M}_{N+j-1}(z)|\\
&=\sum_{j-1}^{N'-N}|\widetilde{M}_{N+j-1}(z)|\cdot |\widetilde{m}_{N+j}(z)-1|\\
&\ll |z|^2 \sum_{j=1}^{N'-N}c_{N+j}^2
\end{align*}
by \eqref{bound-by-1} and \eqref{bound_m_n}.     Because of \eqref{bound_c_m} we see
that the series on the right-hand side converges as $N,N'\to\infty$.
Therefore by Cauchy's criterion we obtain the assertion (iii).
\end{proof}

Now we prove the following result, which is an analogue of
\cite[Proposition 3.4]{IM11}.

\begin{prop}
$\widetilde{M}_N(z)$ converges to $\widetilde{M}(z)$
uniformly in $\mathbb{C}$ when $N\to\infty$.
The limit function $\widetilde{M}(z)$ is continuous and belongs to $L^t$
{\rm (}for any $t\in [1,\infty]${\rm )}, and the above convergence is also 
$L^t$-convergence.
\end{prop}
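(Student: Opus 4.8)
The plan is to upgrade the compact-uniform convergence of Proposition~\ref{prop-4}(iii) to genuine uniform convergence on all of $\mathbb{C}$, and then to read off continuity and the $L^t$-statements from a single domination by $\widetilde{M}_{N_0}$. The key observation is that $N_0\geq 5$ forces the decay rate $\widetilde{M}_{N_0}(z)=O(|z|^{-N_0/2})=O(|z|^{-5/2})$ coming from \eqref{bound-by-N}, which is simultaneously strong enough to control the tail $|z|\to\infty$ uniformly and to guarantee integrability in every $L^t$.

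First I would establish uniform convergence on $\mathbb{C}$. Given $\varepsilon>0$, I split $\mathbb{C}$ into a large disk $\{|z|\leq R\}$ and its complement. By Proposition~\ref{prop-4}(ii) we have $|\widetilde{M}_N(z)|\leq|\widetilde{M}_{N_0}(z)|$ for all $N\geq N_0$; letting $N\to\infty$ and using the pointwise limit supplied by (iii) gives the same bound $|\widetilde{M}(z)|\leq|\widetilde{M}_{N_0}(z)|$ for the limit. Hence for every $N\geq N_0$,
\begin{align}
|\widetilde{M}_N(z)-\widetilde{M}(z)|\leq 2|\widetilde{M}_{N_0}(z)|=O(|z|^{-5/2}),
\end{align}
where the implied constant depends only on $N_0$, so choosing $R$ large makes the left-hand side below $\varepsilon$ uniformly on $\{|z|>R\}$ for all such $N$ at once. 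On the compact disk $\{|z|\leq R\}$, Proposition~\ref{prop-4}(iii) already yields uniform convergence. Combining the two regions gives uniform convergence on all of $\mathbb{C}$.

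Continuity of $\widetilde{M}$ is then immediate: each $\widetilde{m}_n$ is continuous in $z$, being the integral \eqref{RHS} of a jointly continuous integrand over a compact $\theta$-interval, so each finite product $\widetilde{M}_N$ is continuous, and a uniform limit of continuous functions is continuous. For membership in $L^t$ I would again invoke the domination $|\widetilde{M}(z)|\leq|\widetilde{M}_{N_0}(z)|$: since $\widetilde{M}_{N_0}\in L^t$ for every $t\in[1,\infty]$ by Proposition~\ref{prop-4}(i), the same holds for $\widetilde{M}$.

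Finally, for the $L^t$-convergence the case $t=\infty$ is exactly the uniform convergence just proved. For $1\leq t<\infty$ I would apply the dominated convergence theorem to $|\widetilde{M}_N-\widetilde{M}|^t$: by the displayed bound this integrand is dominated, for all $N\geq N_0$, by the fixed integrable function $2^t|\widetilde{M}_{N_0}|^t$, and it tends to $0$ pointwise, whence its integral tends to $0$. The only real obstacle is the tail estimate; once one notices that $N_0\geq 5$ renders $\widetilde{M}_{N_0}$ both small at infinity and $L^t$-integrable, every clause of the proposition follows from this one domination, and no delicate new computation is required.
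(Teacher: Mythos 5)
Your proof is correct and follows essentially the same route as the paper: split $\mathbb{C}$ into a large disk and its complement, dominate everything on the tail by $|\widetilde{M}_{N_0}|$ via Proposition~\ref{prop-4}(i)--(ii), and invoke (iii) on the compact part. The only cosmetic difference is that you package the $L^t$-convergence as an application of dominated convergence, where the paper carries out the corresponding $\varepsilon$-estimates explicitly.
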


\begin{proof}
Let $0<\varepsilon<1$.     By Proposition \ref{prop-4} (i) we can find 
$R=R(N_0)>1$ for which
\begin{align}\label{ineq1}
\int_{|z|\geq R}|\widetilde{M}_{N_0}(z)|^t|dz|<\varepsilon
\end{align}
for any $1\leq t<\infty$ and (noting \eqref{bound-by-N})
\begin{align}\label{ineq2}
\sup_{|z|\geq R}|\widetilde{M}_{N_0}(z)|<\varepsilon.
\end{align}
(Here $R$ is independent of $t$, because by \eqref{bound-by-1} the inequality 
\eqref{ineq1} for $t=1$ implies \eqref{ineq1} for other finite values of $t$.)
Because of Proposition \ref{prop-4} (ii), the above inequalities are valid also for
$\widetilde{M}_{N}(z)$ for all $N\geq N_0$.

Taking $N\to\infty$ in the above inequalities, we find that
$\widetilde{M}\in L^t$ ($1\leq t\leq \infty$).

Let $N'>N$.    Then
\begin{align*}
|\widetilde{M}_{N'}(z)-\widetilde{M}_N(z)|^t
=\left|\prod_{N<n\leq N'}m_n(z)-1\right|^t\cdot|\widetilde{M}_N(z)|^t
\leq 2^t |\widetilde{M}_N(z)|^t
\end{align*}
for any $z\in\mathbb{C}$, so taking the limit $N'\to\infty$ we have
\begin{align}\label{ineq3}
|\widetilde{M}(z)-\widetilde{M}_N(z)|^t
\leq 2^t |\widetilde{M}_N(z)|^t.
\end{align}
Therefore from \eqref{ineq1} and \eqref{ineq2} we obtain
\begin{align}
\int_{|z|\geq R}|\widetilde{M}(z)-\widetilde{M}_{N}(z)|^t|dz|<2^t \varepsilon
\end{align}
and (using the case $t=1$ of \eqref{ineq3})
\begin{align}
\sup_{|z|\geq R}|\widetilde{M}(z)-\widetilde{M}_{N}(z)|<2 \varepsilon
\end{align}
for all $N\geq N_0$.

Now we apply Proposition \ref{prop-4} (iii) for the compact subset $\{|z|\leq R\}$
to obtain that if $N=N(R,\varepsilon)\geq N_0$ is sufficiently large, then
\begin{align}
|\widetilde{M}(z)-\widetilde{M}_{N}(z)|\leq \varepsilon/R^2
\end{align}
for all $z$ satisfying $|z|\leq R$.   Therefore
\begin{align}
\int_{|z|\leq R}|\widetilde{M}(z)-\widetilde{M}_{N}(z)|^t|dz|< 
\pi R^2 \left(\frac{\varepsilon}{R^2}\right)^t \leq \pi R^2\frac{\varepsilon}{R^2}
\leq \pi\varepsilon
\end{align}
and 
\begin{align}
\sup_{|z|\leq R}|\widetilde{M}(z)-\widetilde{M}_{N}(z)|< \frac{\varepsilon}{R^2}
\leq \varepsilon.
\end{align}
Now we arrive at
\begin{align}
\int_{\mathbb{C}}|\widetilde{M}(z)-\widetilde{M}_{N}(z)|^t|dz|<(2^t+\pi) \varepsilon
\end{align}
and 
\begin{align}
\sup_{z\in \mathbb{C}}|\widetilde{M}(z)-\widetilde{M}_{N}(z)|<3 \varepsilon.
\end{align}
Therefore we obatin the assertions of the proposition.
\end{proof}

Since $M_N$ is given by the convolution product of $m_1,\ldots,m_N$ (see \eqref{def_M_N}),
by the definition \eqref{def_M_N_tilde}, $\widetilde{M}_N(z)$ is the 
Fourier transform of $M_N(w)$.
Therefore we can write
\begin{align}
M_N(w)=\int_{\mathbb{C}}\widetilde{M}_N(z)\psi_{-w}(z)|dz|.
\end{align}
Define
\begin{align}
M(w)=\int_{\mathbb{C}}\widetilde{M}(z)\psi_{-w}(z)|dz|.
\end{align}
Then we obtain

\begin{prop}\label{prop-6}
When $N\to\infty$, $M_N(w)$ converges to $M(w)$ uniformly in $w\in\mathbb{C}$.
The limit function $M(w)$ is continuous, non-negative, tends to $0$ when $|w|\to\infty$,
$M(\overline{w})=M(w)$, and
\begin{align}
\int_{\mathbb{C}}M(w)|dw|=1.
\end{align}
The functions $M$ and $\widetilde{M}$ are Fourier duals of each other.
\end{prop}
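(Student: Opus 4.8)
The plan is to read off each asserted property of $M$ either from the corresponding property of the truncations $M_N$ of Section~3 or from the properties of the limit transform $\widetilde M$ just established, transporting everything through the inversion formula $M(w)=\int_{\mathbb{C}}\widetilde M(z)\psi_{-w}(z)|dz|$ that defines $M$. The first step is the uniform convergence $M_N\to M$. Subtracting the two inversion formulae gives
\[
|M_N(w)-M(w)|\le\int_{\mathbb{C}}|\widetilde M_N(z)-\widetilde M(z)|\,|dz|,
\]
and the right-hand side is independent of $w$ and tends to $0$ by the $L^1$-convergence $\widetilde M_N\to\widetilde M$ proved in the preceding proposition; hence the convergence is uniform on all of $\mathbb{C}$.

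The qualitative properties then split into two groups. The pointwise ones I would obtain from the uniform limit: since each $M_N$ with $N\ge 2$ is non-negative and satisfies $M_N(\overline w)=M_N(w)$ by Proposition~\ref{prop_N}, the uniform limit $M$ inherits $M\ge 0$ and $M(\overline w)=M(w)$. For continuity and decay I would use the limit transform directly: because $\widetilde M\in L^1$, its inverse Fourier transform $M$ is automatically continuous, and by the Riemann--Lebesgue lemma it satisfies $M(w)\to 0$ as $|w|\to\infty$.

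The crux is the normalization $\int_{\mathbb{C}}M(w)|dw|=1$, and with it the Fourier duality. The uniformly convergent sequence $M_N\ge 0$ with $\int_{\mathbb{C}}M_N(w)|dw|=1$ (see \eqref{total=1}) yields, via Fatou's lemma, only that $M\in L^1$ with $\int_{\mathbb{C}}M(w)|dw|\le 1$; a strict inequality is not excluded at this stage. To close the gap I would invoke the Fourier inversion theorem: since both $M$ and $\widetilde M$ lie in $L^1$ and $M$ is the inverse transform of $\widetilde M$, the transform of $M$ equals $\widetilde M$, so $M$ and $\widetilde M$ are Fourier duals. Evaluating this identity at $z=0$, where $\psi_0\equiv 1$, gives
\[
\int_{\mathbb{C}}M(w)|dw|=\widetilde M(0)=\lim_{N\to\infty}\widetilde M_N(0)=\lim_{N\to\infty}\prod_{n\le N}\widetilde m_n(0)=1,
\]
since \eqref{RHS} shows $\widetilde m_n(0)=1$ for every $n$.

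The hard part is precisely this last step: ruling out an escape of mass to infinity, against which Fatou controls only one inequality. The Fourier device above sidesteps a direct tightness argument by converting the total mass into the single value $\widetilde M(0)$, which the uniform convergence pins to $1$; equivalently, one could appeal to L\'evy's continuity theorem, the $\widetilde M_N$ being characteristic functions converging pointwise to $\widetilde M$, which is continuous at the origin.
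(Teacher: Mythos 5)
Your proof is correct and follows essentially the same route as the paper, which simply defers to the proof of Proposition 3.5 of Ihara--Matsumoto (2011): uniform convergence of $M_N$ from the $L^1$-convergence of $\widetilde M_N$ via the inversion integral, the pointwise properties inherited from the truncations, continuity and decay from $\widetilde M\in L^1$ and Riemann--Lebesgue, and the total mass pinned down by $\widetilde M(0)=\lim_N\widetilde M_N(0)=1$. Your explicit remark that Fatou alone leaves a possible mass deficit, closed by evaluating the Fourier dual at the origin, is exactly the point the cited argument relies on.
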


This is an analogue of \cite[Proposition 3.5]{IM11}, and the proof is exactly the same.

%%%%%%%%%%%%%%%%%%%%%%%%%%%%%%%%%%%%%%%%%%%%%%%%%%%%%%%%%%%%%%%%%%%%%%%%%%%%%%%%
% 6
%%%%%%%%%%%%%%%%%%%%%%%%%%%%%%%%%%%%%%%%%%%%%%%%%%%%%%%%%%%%%%%%%%%%%%%%%%%%%%%%
\section{Completion of the proof}

Now we finish the proof of our main Theorem \ref{thm-main}.
Among the statement of Theorem \ref{thm-main}, the properties of $M(w)$ is already
shown in the above Proposition \ref{prop-6}.   Therefore the only remaining task is to
prove \eqref{main-formula}.

First consider the case when $\Phi$ is continuous.
We have already shown the ``finite-truncation'' version of \eqref{main-formula}
as \eqref{finite-main-formula}.   We will prove that it is possible to take the limit
$N\to\infty$ on the both sides of \eqref{finite-main-formula}.

From \eqref{def_S_N} we see that the image of the mapping $S_N$ is included in the disc
of radius $\sum_{m=1}^{\infty}c_m$ for any $N$.     Therefore by 
Proposition \ref{prop_support}
we find that the support of $M_N$ for any $N$ is also included in the same disc, 
hence is the support of $M$.     The image of $f$ is clearly also bounded.
Therefore, to prove \eqref{main-formula}, we may assume that $\Phi$ is compactly
supported, hence is uniformly continuous.

Then, as $N\to\infty$, $\Phi(f_N(\alpha))$ tends to $\Phi(f(\alpha))$ uniformly in
$\alpha$.
Also, $M_N(w)\Phi(w)$ tends to $M(w)\Phi(w)$ uniformly in $w$, because of
Proposition \ref{prop-6}.
This yields that, when we take the limit $N\to\infty$ on \eqref{finite-main-formula},
we may change the integration and this limit.
Therefore we obtain \eqref{main-formula} for continuous $\Phi$.

Finally, similarly to the proof of Proposition \ref{prop_char_fct}, we can deduce the
assertion in the case when $\Phi$ is a characteristic function of a compact subset
or its complement.
This completes the proof of Theorem \ref{thm-main}.

\begin{remark}
Consider the Dirichlet series
\begin{align}\label{def_psi_s}
\Psi(s,x)=\sum_{\gamma>0}\frac{x^{i\gamma}}{(1/2+i\gamma)^s(3/2+i\gamma)^s}
\end{align}
where $s\in\mathbb{C}$.   Obviously $\Psi(1,x)=\Psi(x)$.
Because of \eqref{bound_c_m}, the series \eqref{def_psi_s} is absolutely convergent
when $\Re s>1/2$.    It is easy to see that we can 
extend Theorem \ref{thm-main} to $\Psi(s,x)$
in this domain of absolute convergence.     
\end{remark}

\begin{remark}
A generalization of the theory of the Goldbach generating function to the case with
congruence conditions was first considered by R{\"u}ppel \cite{Rupp12}, and the
generalized form of $\Psi(x)$ in this case (written in terms of the zeros of
Dirichlet $L$-functions) was determined by Suzuki \cite{Suzu17}. 
(See also \cite{BhHa20} \cite{BHMS} \cite{BhRu18}.)
It is desirable to generalize our result in the present paper to Suzuki's
generalized $\Psi$.
Probably more interesting is to consider the $\chi$-analogue;
that is instead of the average with respect to $\alpha$ as in our Theorem \ref{thm-main},
consider some analogue with respect to $\chi$ (cf. \cite{Iha08}, \cite{IM11}).
\end{remark}

%%%%%%%%%%%%%%%%%%%%%%%%%%%%%%%%%%%%%%%%%%%%%%%%%%%%%%%%%%%%%%%%%%%%%%%%%%%%%%
%                                                                               
%    References                                                                 
%                                                                               
%%%%%%%%%%%%%%%%%%%%%%%%%%%%%%%%%%%%%%%%%%%%%%%%%%%%%%%%%%%%%%%%%%%%%%%%%%%%%%
\                                                                               
\bigskip

\end{document}